\newtheoremstyle{bolden}
  {3pt}
  {3pt}
  {}
  {}
  {\bfseries}
  {:}
  {   }
  {}
\theoremstyle{bolden}
\newcommand{\E}{\mathbb{E}}
\newcommand{\Lop}{\mathcal{L}}
\newcommand{\V}{\mathcal{V}}
\newcommand{\indic}{\mathbb{I}}
\newcommand{\wdt}{\widetilde}
\newcommand{\sgn}{{\operatorname{sgn}}}
\newcommand{\ep}{\varepsilon}
\newtheorem{theorem}{Theorem}[section]
\newtheorem{corollary}[theorem]{Corollary}
\newtheorem{remark}[theorem]{Remark}
\begin{document}

\title{UAV Circumnavigation of an Unknown Target Without Location Information Using Noisy Range-based Measurements}

\author{
Araz Hashemi $^1$
\thanks{$^1$ Department of Mathematics, Wayne State University, Detroit, MI, 48202, araz.hashemi@gmail.com}
 Yongcan Cao $^2$
\thanks{$^2$ Control Science Center of Excellence, Air Force Research Laboratory, Wright-Patterson AFB, OH 45433 }
 David Casbeer $^2$
George Yin $^{1}$
\thanks{Approved for public release; distribution unlimited, 88ABW-2013-4043.  This work has been supported in part by AFOSR LRIR: 12RB07COR.}
}

\date{}
\maketitle

\begin{abstract}
This paper proposes a control algorithm for a UAV to circumnavigate an unknown
target at a fixed radius when the location information of the UAV is unavailable. By assuming that the UAV has a constant velocity, the control algorithm makes adjustments to the heading angle of the UAV based on range and range rate measurements
from the target, which may be corrupted by additive measurement noise.
The control algorithm has the added benefit of being globally smooth
and bounded. Exploiting the relationship between range rate and bearing angle,
we transform the system dynamics from Cartesian coordinate in terms of location and heading to polar
coordinate in terms of range and bearing angle. We then formulate
the addition of measurement errors as a stochastic differential equation.
A recurrence result is established showing that the UAV will reach a neighborhood of the desired orbit in finite time. Some statistical measures of
performance are obtained to support the technical analysis.
\end{abstract}
\section{Introduction}
Unmanned Aerial Vehicles (UAVs) have been rapidly developing in
capability and hold promise for private, military, and even commercial uses.
From the transport of small goods
in rural areas to the early detection of forest fires \cite{Gertler_US_2012},
UAVs will likely be a ubiquitous tool in coming years. However,
navigation of UAVs is heavily dependent on the use of GPS signals for
location information.
Recent tests show that UAVs are vulnerable to GPS jamming
and spoofing, as evidenced by~\cite{Warwick_Lightsquared_2011},~\cite{Shepard_Dronehack_2012}.
Hence, it is desirable to develop autonomous
control schemes under GPS-denied environment.

A typical application of UAVs is to gather information from a target. In order to obtain enough information regarding a target, it is often necessary to have the UAV orbit around this target at some predetermined distance.
Such a UAV motion is often called \emph{circumnavigation}. While some study has been devoted to the circumnavigation mission,
most control techniques use some type of location information.
In~\cite{Shames_Circumnavigation_2012}, the GPS coordinate of the target is
considered unknown but the location information of the UAV under some local coordinate frame is assumed
to be available. Range measurements from the target are then used to
localize the target; that is, to estimate the relative location of the target from the UAV.
A control algorithm is then designed to produce the desired UAV motion. In~\cite{Deghat_Target_2013}, the dynamics are modeled differently which allows the use of the bearing angle for target localization,
but the location information of the UAV under some local coordinate frame is still assumed.

In~\cite{Cao_Circumnavigation_2013}, Cao et al. exploited a
trigonometric relationship in the system dynamics that allows the
range rate to be used as a proxy for the bearing angle.
It also enables one to transform the UAV dynamics from Cartesian to polar coordinates,
reducing the state space from the 2D location plus the heading angle to simply the range and bearing angle.
Control algorithms were then developed which use range and range rate measurements to
drive the UAV to the desired orbit without the need for target localization
nor the knowledge of the UAV's current position. Clearly, this is
advantageous in situations where GPS is unreliable or unavailable.

In this paper, we expand on the above work to develop a control algorithm for the
circumnavigation task using noisy range and range rate
measurements. In~\cite{Cao_Circumnavigation_2013},
two different control algorithms were developed;
one is smooth but unsaturated, while the other is
saturated but nonsmooth. Both control algorithms were defined only
outside the desired orbit, meaning that zero control input is applied on the inside of the desired orbit to force the UAV to fly straight until it exits again.
To improve the performance we develop a new control algorithm which is
both smooth and saturated via introducing
an appropriate control policy for inside the desired orbit.
In addition, a recurrence result can be established; meaning that the UAV will reach a neighborhood of the desired orbit in finite time, and return if it deviates away from the neighborhood. We then employ numerous examples show the robustness of the new algorithm against measurement noise as well as wind because only range-based measurements are needed.

The rest of the paper is organized as follows.
Section~\ref{sec:formulation} describes the assumed
dynamics and the relations used in the development of the control.
Section~\ref{sec:control} motivates and develops a new control policy based on range and range rate measurements;
first by examining when the UAV
is outside of a given `singular' orbit corresponding to the choice
of one parameter in the control algorithm, and then by examining when the UAV is inside the singular
orbit. Section~\ref{sec:me-analysis} focuses on analyzing the effect of noisy range and range rate measurements on the proposed control algorithm by means of stochastic differential equations (SDEs).
A recurrence result is then established, deriving an upper bound on the time
for the UAV to reach some neighborhood of the desired orbit.
Finally, Section~\ref{sec:simulation} presents a simulation study of
the performance of the control algorithm with noise-corrupted
measurements and collect performance statistics for varying choices of
the gain size. Then the effect of constant
wind is simulated to demonstrate the robustness of the control algorithm when the gain is
appropriately large. Finally, Section~\ref{sec:conclusion} summarizes the
paper and outlines directions for future work.

\section{Probem Formulation} \label{sec:formulation}
The problem set-up is as follows.
Assuming the UAV travels at a constant velocity $V$,
the dynamics are given by
\begin{align} \label{det-cart-dynamics}
  \begin{aligned}
  \dot{x} & = V \cos(\psi) \\
  \dot{y} & = V \sin(\psi) \\
  \dot{\psi} & = u
\end{aligned}
\end{align}
where $[x,y]$ is the 2D location of the UAV, $\psi$ is the heading angle of the UAV, and $u$ is the heading rate to be controlled.
The objective is to design a control algorithm for $u$ such that the UAV orbits some unknown stationary target at a desired radius $r_d$. Considering limited measurements available under GPS-denied environment, the controller has to be constructed based on range measurement $r(t)$ and range rate measurement $\dot{r}(t)$. Here $r(t)$ refers to the distance from the UAV to the target and $\dot{r}$ refers to the rate of $r(t)$.

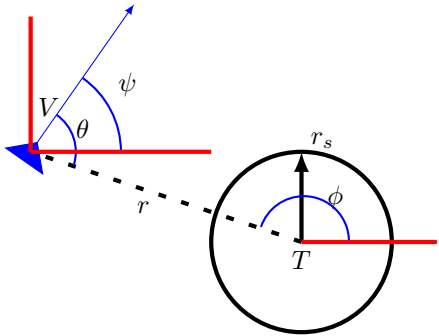
\begin{figure}[h!]
\centering
\begin{tikzpicture}[scale=.6]
\coordinate (center) at (8,3);
\coordinate (uav) at (2,5);
\draw [black, ultra thick] (center) circle [radius = 2.0];
\draw[-latex, black, ultra thick] (center) -- +(90:2);
\node [right] at (8,5.25) {$r_{s}$};
\node [below] at (center) {$T$};
\draw [blue, fill=blue, rotate=-35, shift=(uav)] ++(90:.25) -- ++(-45:.707) -- ++(180:1) -- ++(45:.707);
\draw [black, loosely dashed, ultra thick] (center) -- (uav);
\node [below] at (4.5,4.1) {$r$};
\draw [-latex, blue] (uav) -- ++(55:4);
\node [] at (2.4,6) {$V$};
\draw [blue,thick] (uav) +(55:1) arc [radius=1, start angle=55, end angle=-18.435];
\node [] at (3.15,5.5) {$\theta$};
\draw [blue,thick] (uav) +(55:2) arc [radius=2, start angle=55, end angle=0];
\node [] at (4.15,6.5) {$\psi$};
\draw [blue,thick] (10,2.7) +(161.57:1) arc [radius=1, start angle=0, end angle=161.57];
\node [] at (8.75,4) {$\phi$};
\draw [red, ultra thick] (uav) -- (6,5);
\draw[red, ultra thick] (uav) -- (2,8);
\draw[red, ultra thick] (center) -- (11,3);
\end{tikzpicture}
\caption{Heading angle $\psi$ vs. bearing angle $\theta$ vs. reference angle $\phi$.}
\label{fig:angles}
\end{figure}
For the convenience of notation, we take the target $T$ as the origin of our coordinate frame.
To design a control algorithm and carry out the analysis we shall make use of the
reference angle $\phi$ to the UAV,
as well as the local heading angle $\psi$ of the UAV
and the bearing angle $\theta$ from the reference vector to the heading vector.
See Figure~\ref{fig:angles} for a depiction. We note that
\begin{align} \label{angle-id}
	\begin{aligned}
	\theta = \pi - \phi + \psi.
	\end{aligned}
\end{align}
Then observing
\begin{align}
	\begin{aligned}
	\dot{r} &= \frac{1}{\sqrt{x^2+y^2}} \left[ x \dot{x} + y \dot{y} \right]
		= \cos(\phi) \dot{x} + \sin(\phi) \dot{y},
	\end{aligned}
\end{align}
using the dynamics for $\dot{x}$ and $\dot{y}$ given by \eqref{det-cart-dynamics},
and applying $\phi = \pi - \theta +\psi$
 we arrive at
\begin{align}
	\begin{aligned}
	\dot{r} = -V\cos\theta .
	\end{aligned}
\end{align}
Thus there is a direct correspondence between the bearing
angle $\theta$ and the range rate $\dot{r}$.
This fundamental relation will allow us to use $\dot{r}$ as a
proxy for $\theta$ to design our control.

Also, $\dot{\theta}=-\dot{\phi} + \dot{\psi} =-\dot{\phi}+u$, where
\begin{align}
	\begin{aligned}
	\dot{\phi} &= \frac{ \cos(\phi) \dot{y} - \sin(\phi) \dot{x}}{\sqrt{x^2 +y^2}} = -\frac{V}{r} \sin\theta
	\end{aligned}
\end{align}
so we can transform the system dynamics from $\{(x,y,\phi)\}$ in~\eqref{det-cart-dynamics} to $\{(r,\theta)\}$ given by
\begin{align}
	\begin{aligned}
	\dot{r} &= -V\cos\theta \\
	\dot{\theta} &= \frac{V\sin\theta}{r} + u
	\end{aligned}
\end{align}
The goal is to design a control $u(r,\dot{r}) =u(r, -V\cos\theta)$ such that the dynamics drive
$(r,\theta)$ to $(r_d, \frac{\pi}{2})$.

\section{The Control Algorithm} \label{sec:control}
The designed control algorithm is composed of two cases: (1) $r\geq r_s$; and (2) $r<r_s$, where $r_s<r_a$ is a positive constant defined next. The following two subsections detail how control algorithm is developed for the two cases. 
\subsection{Outer Control}
Suppose that $r\geq r_s$, \textit{i.e.,} the UAV is outside of the black circle as in Figure~\ref{fig:tangent-control}.
The idea for the control algorithm is to drive the UAV towards the tangent point (from the UAV) of the black circle. There is a need
to distinguish between the black circle which is
being aimed for and the `actual' red circle that is achieved, because
we shall see that they are not the same (though an explicit relationship between them
can be identified based on the controller proposed next).
Letting $\gamma = \sin^{-1}\left(\frac{r_s}{r} \right)$, we want to adjust $\psi$ so that $\theta=\gamma$.
Without the ability to measure $\psi$,
it is not possible to make a direct adjustment\footnote{
Note that if we can also measure $\psi$ (e.g. by including a magnetometer to the UAV) in addition to $r$ and $\dot{r}$,
then we can recover coordinates from the identity $\phi = \pi + \psi - \cos^{-1}\left(\frac{-\dot{r}}{V}\right)$ by
\begin{align*} x&= r\cos\phi=r \left[\frac{\dot{r}}{V}\cos\psi - \sin\psi \sin\left(\cos^{-1}\left(\frac{\dot{r}}{V}\right)\right) \sin\psi \right] \\
y&=r\sin\phi=r \left[\frac{-\dot{r}}{V}\sin\psi + \cos\psi \sin\left(\cos^{-1}\left(\frac{\dot{r}}{V}\right)\right) \sin\psi \right]
\end{align*}}.
If $\dot{r}$ is measurable, it can serve as a proxy for $-V\cos\theta$.
Given a preference that the UAV orbit clockwise
(so that $\theta, \gamma \in [0, \pi]$),
$\cos(\cdot)$ is decreasing on $[0,\pi]$. It then can be obtained that
\[
 -(\cos\theta -\cos\gamma) = \cos\gamma-\cos\theta=
 \Big \{
 \begin{array}{l r}
\le 0 &   \theta \ge \gamma \\
\ge 0 &  \theta \le \gamma
 \end{array}
 \]
 and thus
\begin{align}
  \begin{aligned}
    &-k\left[ \dot{r} +V\cos\sin^{-1}\frac{r_s}{r}\right]\\
    &= kV\left[ \cos\theta - \cos\gamma \right]
    = \Big\{
      \begin{array}{lcr}
	<0 & \text{for} &\theta > \gamma \\
	>0 & \text{for} &\theta < \gamma
      \end{array}.
  \end{aligned}
\end{align}
This motivates us to define a control for outside $r_s$ by
\begin{align}
  \label{u_o-defn}
  \begin{aligned}
  u_o(r,\dot{r}) &=-k\left[ \dot{r} +V\cos\sin^{-1}\left( \frac{r_s}{r}
  \right) \right] \indic_{\{r \ge r_s\}} ,\\
  & \qquad \ \text{ or equivalently }  \\
  u_o(r, \theta) &= \left[ kV\cos\theta-kV\frac{\sqrt{r^2-r_s^2}}{r} \right] \indic_{\{r \ge r_s \}}
\end{aligned}
\end{align}
where $k$ is a positive constant. Note that the control is bounded by $2kV$.

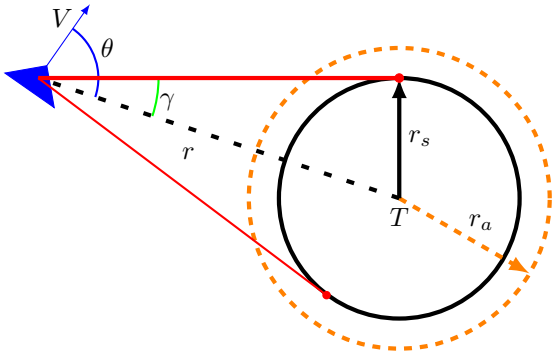
\begin{figure}[h]
\centering
\begin{tikzpicture}[scale=.8]
\coordinate (center) at (8,3);
\coordinate (uav) at (2,5);
\draw [orange, dashed, ultra thick] (center) circle [radius = 2.5];
\draw [black, ultra thick] (center) circle [radius = 2.0];
\draw [-latex, orange, dashed, ultra thick] (center) -- +(-30 : 2.5);
\draw[-latex, black, ultra thick] (center) -- +(90:2);
\node [right] at (9,2.6) {$r_{a}$};
\node [right] at (8,4) {$r_{s}$};
\node [below] at (center) {$T$};
\draw [blue, fill=blue, rotate=-35, shift=(uav)] ++(90:.25) -- ++(-45:.707) -- ++(180:1) -- ++(45:.707);
\draw [black, loosely dashed, ultra thick] (center) -- (uav);
\node [below] at (4.5,4.) {$r$};
\draw [-latex, blue] (uav) -- ++(55:1.5);
\draw [blue,thick] (uav) +(55:1) arc [radius=1, start angle=55, end angle=-18.435];
\draw [green,thick] (uav) +(-18.435:2) arc [radius=2, start angle=-18.435, end angle=0];
\node [] at (4.15,4.6) {$\gamma$};
\node [] at (3.15,5.5) {$\theta$};
\node [] at (2.4,6) {$V$};
\draw [red, ultra thick] (uav) -- (8,5);
\draw [red, fill=red] (8,5) circle [radius=2pt];
\draw [red, thick] (uav) -- +(-37:6) [red, fill=red] circle [radius=1.5pt];
\end{tikzpicture}
\caption{We design a control which aims at the tangent of the orbit of radius $r_s$,
but will `stabilize' at the orbit of radius $r_a$. Here, $\gamma = \sin^{-1}(r_s/r)$.}
\label{fig:tangent-control}
\end{figure}

Interestingly, the UAV cannot stabilize at an orbit of radius $r_s$. Assuming a stable
circular orbit exists with its radius $r_a$, by definition, $\dot{r} = 0$. The nominal angular velocity $ |\frac{V}{r_a}| = |\dot{\psi}| = |u(r_a,0)|$, indicating that
\begin{align}
  \begin{aligned} \label{r_s-defn}
    & \frac{V}{r_a} = kV\cos\sin^{-1} \left( \frac{r_s}{r_a} \right)
    = kV\frac{\sqrt{r_a^2 - r_s^2}}{r_a} \\
    & \implies \frac{1}{k^2} = r_a^2 - r_s^2.
  \end{aligned}
\end{align}
Thus, given any desired actual orbit $r_d$, one may choose a gain size $k \in [\frac{1}{r_d}, \infty)$ and obtain the parameter $r_s = \sqrt{r_d^2 - \frac{1}{k^2}}$ for the control algorithm \eqref{u_o-defn} such that a stable orbit of radius $r_d$ is
feasible. 
From here throughout, we set $r_a=r_d$ so that the actual orbit is equal to the desired orbit, and take $r_s$ as defined by \eqref{r_s-defn}.

\subsection{Inner Control}
When $r<r_s$,~\eqref{u_o-defn} is not well defined due to the term $\cos\sin^{-1}\left(\frac{r_s}{r}\right)$. So a new controller is needed for inside the black circle in Figure~\ref{fig:tangent-control}. In~\cite{Cao_Circumnavigation_2013}, zero control input is applied in order to drive the UAV outside the black circle. One disadvantage of such a control strategy (\textit{i.e.,} zero control for inside the black circle) is that the UAV has to move outside the black circle before control takes affect. As shown in Figure~\ref{fig:u_o-sim-large-gain}, the performance is degraded if the UAV moves inside the black circle quite often. This is particularly true when range and/or range rate measurements are noisy and $r_d$ is close to $r_s$ for large $k$. To keep the UAV from crossing across the desired orbit, similar to the trajectory depicted in Figure~\ref{fig:u_o-sim-small-gain}, a new control algorithm is needed for this case. 


\begin{figure}[ht]
\begin{minipage}{0.45\linewidth}
\centering
\includegraphics[width=\linewidth]{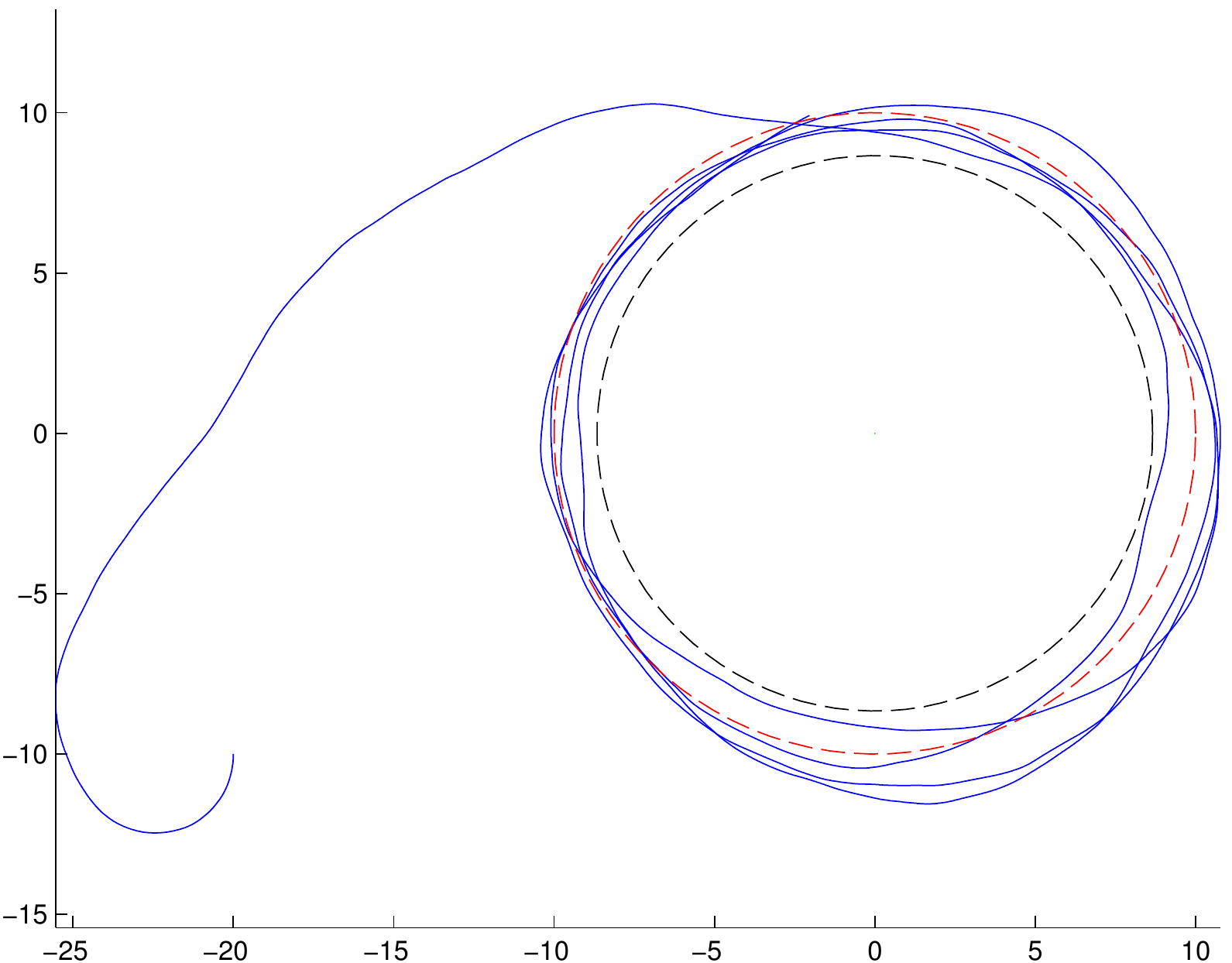}
\caption{A sample trajectory under $u_o$ with small gain: $k=.2$, $r_a=10$, $r_s=8.67$, $V=1$, and additive white measurement noise $\sigma=0.5$.}
\label{fig:u_o-sim-small-gain}
\end{minipage}
\hspace{0.5cm}
\begin{minipage}{0.45\linewidth}
\centering
\includegraphics[width=\linewidth]{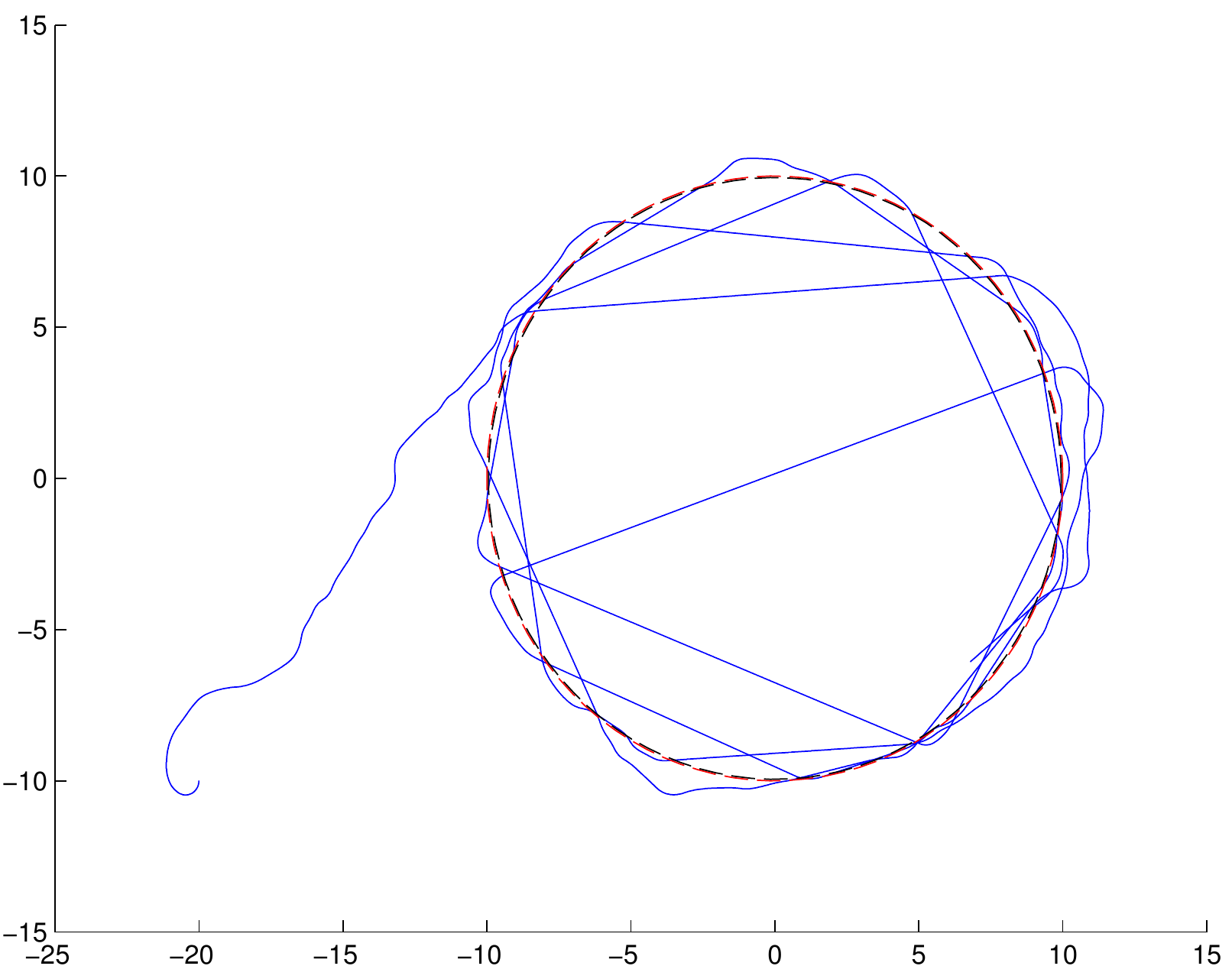}
\caption{A sample trajectory under $u_o$, with large gain: $k=1$
corresponding to $r_s=9.95$. When measurement error nudges the UAV past the
$r_s$ threshold, it cuts across the circle.}
\label{fig:u_o-sim-large-gain}
\end{minipage}
\end{figure}

Note that the two terms in
 $u_o(r, \theta) = kV\cos\theta-kV\frac{\sqrt{r^2-r_s^2}}{r} $
 work separately to adjust the
bearing angle and radius.
If $\theta < \frac{\pi}{2}$ (the bearing is too
acute) then $kV\cos\theta$ is positive and drive the UAV counter clockwise, and does
the reverse if $\theta > \frac{\pi}{2}$. And if $r > r_s$, then
$\frac{-kV}{r} \sqrt{r^2-r_s^2} $
adjusts the heading in such a way that the UAV rotates toward heading the target. This suggests the following inner control as
\begin{align} \label{u_i-defn}
	\begin{aligned}
	u_i(r,\dot{r}) & = -k \left[ \dot{r} - \cos\sin^{-1}\left( \frac{r}{r_s} \right) \right] \indic_{\{r <r_s \}} \\
	u_i(r,\theta) & = \left[ kV\cos\theta + \frac{kV}{r_s}\sqrt{r_s^2-r^2} \right]\indic_{\{r<r_s\}},
	\end{aligned}
\end{align}
where the first component in~\eqref{u_i-defn} is the same as the first component in $u_o$, but
the second component is negated with the nominator and denominator flipped.

Again, a stable orbit of radius $r_i <r_s$ is possible.
If such an orbit exists, it must satisfy $|\frac{V}{r_i}| = |u_i(r_i,0)|$.
By computation, one can obtain
\begin{align} \label{riPM-defn}
  \begin{aligned}
    r_i^2 &=\frac{1}{2} \left[ r_s^2 -\sqrt{r_s^4-\frac{4}{k^2} r_s^2 } \right] \\
    &= \frac{1}{2} \left( r_a^2-\frac{1}{k^2} \right)
    \pm \frac{1}{2} \sqrt{\left( r_a^2 - \frac{1}{k^2} \right)\left(
    r_a^2-\frac{5}{k^2}\right) }.
  \end{aligned}
\end{align}
which has no solution for
$k \in (\frac{1}{r_a}, \frac{\sqrt{5}}{r_a})$,
but otherwise has two solutions $r_{i-} \to 0$
and $r_{i+} \to r_a$ as $k \to \infty$.
These will play some role in the recurrence analysis.
\begin{remark}\label{rem:r_i-instability}
We note that the UAV can only stabilize at one of the inner stable
radii $r_i$ if the initial point and heading is exactly along the orbit with radius $r_i$ in a
counter-clockwise orientation,
corresponding to $(r(0),\theta(0))=(r_i,3\pi/2)$, thus forcing the `$\theta$'
(or $\dot{r}$)
component of the control $kV\cos\theta$ in \eqref{u_i-defn} to be 0.
However, any perturbation of the inputs for the control
which force the UAV even negligibly off-course will cause the $\theta$
component to drive the UAV's bearing angle towards $\pi/2$ because $(r_i,3\pi/2)$ is an unstable equilibrium. 
Eventually, the UAV will be driven outside the orbit with radius $r_s$. In the
presence of measurement errors, the UAV
is driven outside the orbit with radius $r_s$ almost immediately as evidenced by
Figure~\ref{fig:u-sim-in}. Other simulations demonstrate that even if $(r(0),\theta(0))=(r_i, 3\pi/2)$ and no measurement errors exist,
accumulated numerical errors
will eventually drive the UAV slightly off the orbit of radius $r_i$ after which
it immediately moves outside the orbit with radius $r_s$.
Hence the inner stable orbits
are of little practical concern for the implementation
 of the control algorithm.
\end{remark}

As a summarization, the proposed control algorithm is given by $u = u_o + u_i$; that is
\begin{align} \label{u-defn}
	\begin{aligned}
	u(r, \dot{r}) =& -k\dot{r}
	-kV \cos\sin^{-1} \left( \frac{r_s}{r} \right) \indic_{\{r > r_s\}}\\
	& 
	+kV \cos\sin^{-1} \left( \frac{r}{r_s} \right) \indic_{\{r < r_s\}} 
	\end{aligned}
\end{align}
or equivalently 
\begin{align*}
	\begin{aligned}
	u(r,\theta) =& kV\cos\theta -\frac{kV}{r}\sqrt{r^2-r_s^2}\indic_{r>r_s}\\
	&+\frac{kV}{r_s}\sqrt{r_s^2-r^2} \indic_{\{r<r_s\}}.
	\end{aligned}
\end{align*}
As an example, Figures~\ref{fig:u-sim-out} and~\ref{fig:u-sim-in} depict the improved performance of the UAV under the proposed control algorithm~\eqref{u-defn} with $k=1$. Notice that the UAV will eventually stay close to the desired orbit as opposed to the behavior seen in Figure~\ref{fig:u_o-sim-large-gain} when zero control is applied for the case $r(t)<r_s$.

\begin{figure}[ht]
\begin{minipage}[b]{0.45\linewidth}
\centering
\includegraphics[width=\textwidth]{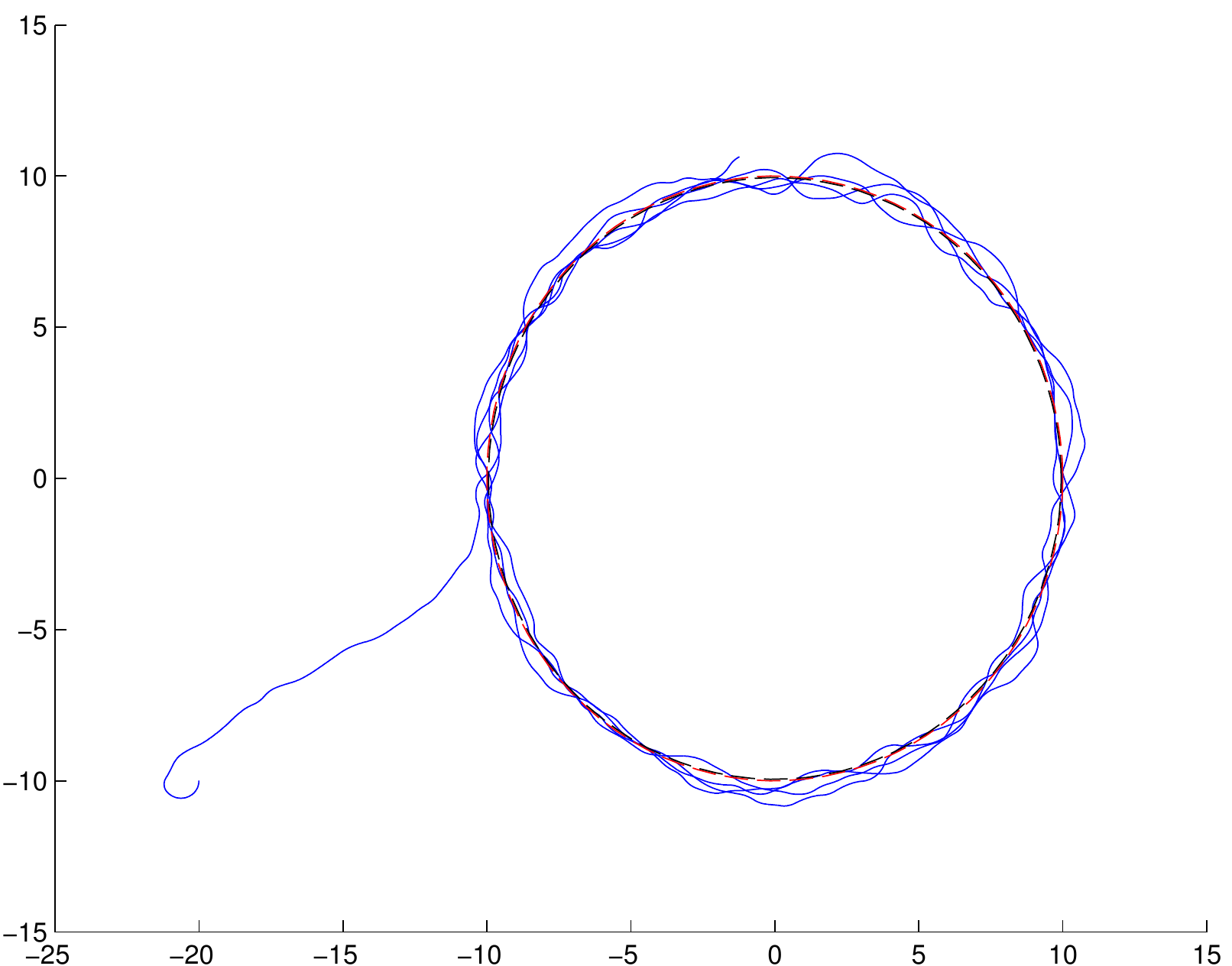}
\caption{A sample trajectory under $u$ with initial point outside the desired orbit}
\label{fig:u-sim-out}
\end{minipage}
\hspace{0.5cm}
\begin{minipage}[b]{0.45\linewidth}
\centering
\includegraphics[width=\textwidth]{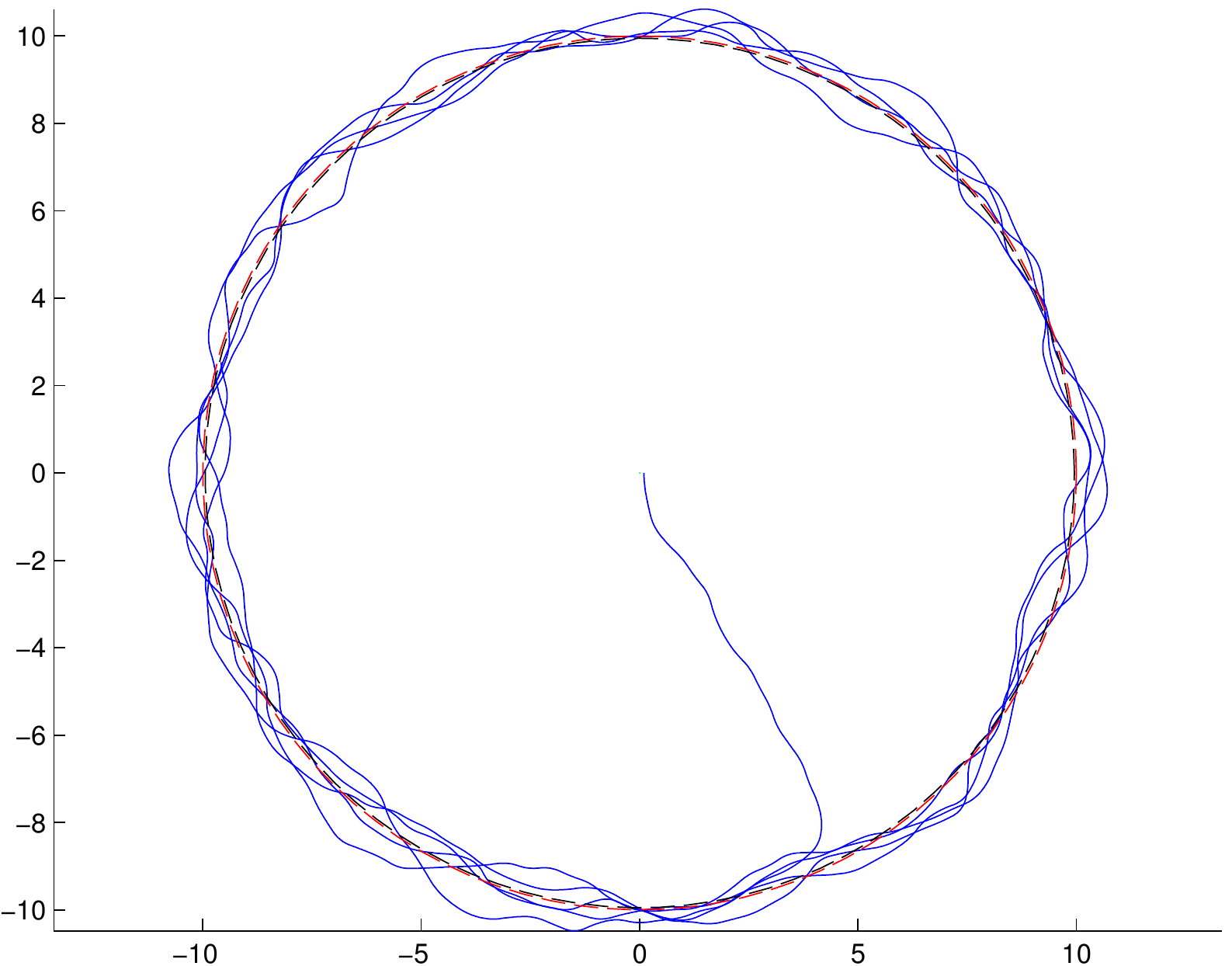}
\caption{Sample trajectory under $u$ with initial point inside the desired orbit}
\label{fig:u-sim-in}
\end{minipage}
\end{figure}

\section{Measurement Error Analysis} \label{sec:me-analysis}

\subsection{SDE Formulation}

Here we formally introduce additive measurement noises in the controller.
For example, range $r$ can be measured accurately,
but range rate measurement is noisy
$\wdt{\dot{r}} = \dot{r} + \nu$ where
$\nu \sim \mathcal{N} (0, \sigma) $.
This model has practicality, as the range measurements are
tremendously accurate compared to range rate measurements
regardless of what method we use for the estimation.
Then the noisy control input becomes
\begin{align*}
  \wdt{u}(r, \theta, \nu) &\overset{\Delta}{=} u(r, \dot{r} + \nu)
   \overset{\Delta}{=} u(r, \theta) -k\nu.
\end{align*}

With the noisy control input, the noisy system dynamics are modeled by the
stochastic differential equation
\begin{align}
\begin{aligned}
d\left[\begin{array}{c}
r\\ \theta
\end{array}\right]
=\left[\begin{array}{c}
-V\cos\theta\\
\frac{V\sin\theta}{r}+u(r,\theta)
\end{array}\right]dt+\left[\begin{array}{c}
0\\-k\sigma
\end{array}\right]
d\xi
\end{aligned}
\label{me-sde}
\end{align}
where $\xi$ is a standard Brownian motion.
One can verify that
 the control defined by \eqref{u-defn}
has linear growth and is Lipschitz continuous
(even at $r=r_s$), and the other coefficients also satisfy this property
on domains bounded away from $r=0$.
Hence \eqref{me-sde} describes an Ito diffusion,
and thus a unique Markov solution exists for the trajectory as in
\cite[Definition 7.1.1, Theorem 5.2.1]{Oksendal_Stochastic_2003}.
The associated generator $\Lop$ of the diffusion is given by
\begin{align}
\begin{aligned}&\Lop\V(r,\theta)  =\left[-V\cos\theta\right]\frac{\partial}{\partial r}\V(r,\theta)\\
& +\left[\frac{V\sin\theta}{r}+u(r,\theta)\right]\frac{\partial}{\partial\theta}\V(r,\theta)+\frac{k^{2}\sigma_{rr}^{2}}{2}\frac{\partial^{2}}{\partial\theta^{2}}\V(r,\theta).\end{aligned}
\label{Lop-defn}
\end{align}

\subsection{A Recurrence Result}
Let $Z(t)$ be an $\ell$-dimensional diffusion process. It is said to be regular if
it does not blow up in finite time w.p.1.  Suppose that
$Z(t)$ is an $\ell$-dimensional diffusion process that is regular, that $D$ is
an open set with compact closure, that  $Z(0)=z\in D^c$ the complement of $D$,
and that $\sigma^{z}_{D}=
\inf\{t: Z^{z}(t)\in D\},$
where $Z^z(t)$ signifies the initial data $z$ dependence of the diffusion. The process
$Z^{z}(\cdot)$ is {\em recurrent} with respect
to $D$ if $ P(\sigma^{z}_{D}<\infty)=1$
for any $ z \in D^c$; otherwise,
the process is {\em transient} with respect to $D$.
A recurrent
process with finite mean recurrence time for some set $D$
is said to be {\em positive
recurrent} w.r.t. $D$;
otherwise, the process is {\em null recurrent}
w.r.t. $D$.

Coming back to our problem,
we shall show that the trajectory of the UAV under
control policy \eqref{u-defn} with dynamics given by
\eqref{me-sde} is recurrent with respect
 to a neighborhood of either
$r=r_a$ or $r=0$, as depicted in Figure~\ref{fig:recurrent-set}.
The recurrence is in the sense that if the initial point of the UAV is
outside of the recurrent set, the UAV will enter the recurrent set in
finite time almost surely.

\begin{figure}[h]
\centering
\begin{tikzpicture}[scale=.55]
\begin{scope}[shift = {(5,5)}]
\coordinate (center) at (0, 0);
\draw[fill=green](center) circle [radius=5.15];
\draw[fill=white ](center) circle [radius=4.5];
\draw[ultra thick, violet](center) circle [radius=5.15];
\draw[ultra thick, blue](center) circle [radius=4.5];
\draw [red, dashed, ultra thick] (center) circle [radius = 5];
\draw [black, ultra thick, dashed] (center) circle [radius = 4.75];
\draw[fill=green](center) circle [radius=.75];
\draw[ultra thick, blue](center) circle [radius=.75];
\draw[red, ultra thick](center) -- (-5,0);
\node[above, red] at (-3.5,0) {$r_a$};
\draw[black, ultra thick](center) -- (-4.6435, -1);
\node[right, black] at (-3, -1) {$r_s$};
\draw[violet, ultra thick] (center) -- (-4.1860,3);
\node[violet, left] at (-4.1860,3) {$r_{a,\varepsilon}$};
\draw[blue, ultra thick] (center) -- (-3.3541, -3);
\node[blue, right] at (-2.8, -2.75) {$r_{i+,\varepsilon}$};
\draw[blue, ultra thick] (center) -- ( 0, -.75);
\node[blue, below] at ( 0, -.8) {$ r_{i-, \varepsilon} $};
\draw [blue, fill=blue, shift={(0,5)}] ++(0: 0.5) -- ++(135: 0.7071) -- ++(-90:1) -- ++(45:.7071);
\draw [blue, fill=blue, rotate=0, shift={(.75,0)}] ++(90:.25) -- ++(-45:.707) -- ++(180:1) -- ++(45:.707);
\end{scope}
\end{tikzpicture}
\caption{The recurrent set $U_{k, \ep}$. }
\label{fig:recurrent-set}
\end{figure}
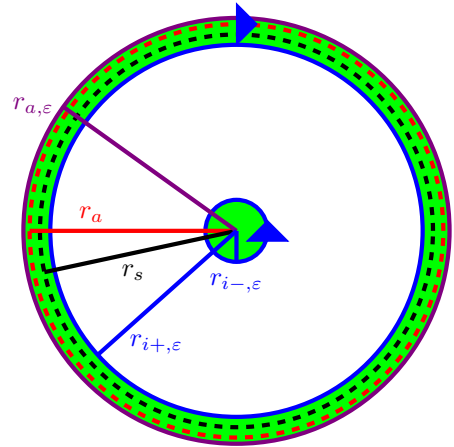

We shall prove our result using a Lyapunov function approach.
Consider the candidate function
\begin{align}\label{sgn-lya-fcn}
\V(r,\theta) = \frac{k}{V}  |r-r_s| + \frac{\theta}{V} \sgn(r-r_s)
+\frac{2\pi}{V}
\end{align}
which is everywhere positive on the domain
 $r \in (0,r_s) \cup (r_s, \infty)$ and $\theta \in [0, 2\pi)$.
Note that $\V$ by \eqref{sgn-lya-fcn} is not differentiable along
 $r=r_s$. However, this will become part of the recurrent set $U_{k, \ep}$
 and it is only on the complement set $U_{k, \ep}^c$ which the Lyapunov
 function must be smooth.
On such a domain, we have that
\begin{align}
	\begin{aligned} \label{LopV}
	\Lop\V &= -k\cos\theta\sgn(r-r_s)
	+\frac{\sin\theta}{r}\sgn(r-r_s) \\
	&  \qquad+k\cos\theta\sgn(r-r_s)
	+u(r)\sgn(r-r_s) \\
	&=\frac{\sin\theta}{r}\sgn(r-r_s)
	-\frac{k}{r}\sqrt{r^2-r_s^2}\indic_{\{r>r_s\}} \\
	& \qquad \qquad \qquad
	-\frac{k}{r_s}\sqrt{r_s^2-r^2}\indic_{\{r<r_s\}}.
	\end{aligned}
\end{align}

\begin{theorem}\label{thm:r_ep-exist}
For $\ep$ sufficiently small and $k$ sufficiently large,
there exists
\begin{align}
 \begin{aligned}
 &r_{i-,\ep} \searrow r_{i-} \quad
 & r_{i+,\ep} \nearrow r_{i+} \qquad
 & r_{a,\ep} \searrow r_a \quad
 & \text{ as } \ep \downarrow 0 \\
 &\text{ where } & & & \\
 &r_{i-} \searrow 0 \quad
& r_{i+} \nearrow r_s \qquad
 & r_s \nearrow r_a \quad
 & \text{ as } k \uparrow
\end{aligned}
\end{align}
such that $\Lop \V \le -\ep$ on $U_{k, \ep}^c$, where
\begin{align} \label{U_ep-defn}
\begin{aligned}
U_{k, \ep} \overset{\Delta}{=}  &
 \{(0,r_{i-,\ep}) \times (\pi,2\pi)\}
  \cup \{ (r_{i+,\ep}, r_{a,\ep}) \times (0, \pi) \}.
 \end{aligned}
 \end{align}
\end{theorem}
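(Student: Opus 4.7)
The plan is to verify the pointwise inequality $\Lop\V\le-\ep$ directly using~\eqref{LopV}, deriving the defining radii as roots of one-variable inequalities obtained by replacing $\sin\theta$ with its extremal values $\pm 1$. Since $\V$ is smooth separately on $\{r>r_s\}$ and $\{r<r_s\}$, I would treat the outer and inner regions independently; the singular locus $r=r_s$ is absorbed into the recurrent set $U_{k,\ep}$ (as $r_{i+,\ep}<r_s<r_{a,\ep}$), so non-differentiability never intrudes on the complement.

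On the outer region $r>r_s$, the formula gives $\Lop\V=\sin\theta/r-(k/r)\sqrt{r^2-r_s^2}$. Using $\sin\theta\le 1$ reduces the condition $\Lop\V\le-\ep$ to the scalar inequality $k\sqrt{r^2-r_s^2}\ge 1+\ep r$, and I would define $r_{a,\ep}$ as its unique root in $(r_s,\infty)$. At $\ep=0$ this becomes $k^2(r_a^2-r_s^2)=1$, recovering~\eqref{r_s-defn} so that $r_{a,0}=r_a$; the implicit function theorem applied to the strictly increasing function $r\mapsto k\sqrt{r^2-r_s^2}-\ep r$ then gives $r_{a,\ep}\searrow r_a$ as $\ep\downarrow 0$, and $r_s\nearrow r_a$ as $k\uparrow\infty$ follows directly from $r_s=\sqrt{r_a^2-1/k^2}$. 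A parallel computation on the inner region $r<r_s$ gives $\Lop\V=-\sin\theta/r-(k/r_s)\sqrt{r_s^2-r^2}$, so bounding $-\sin\theta\le 1$ converts the condition into $k^2 r^2(r_s^2-r^2)\ge r_s^2(1+\ep r)^2$. At $\ep=0$ this is a quadratic in $r^2$ whose roots coincide with the $r_{i\pm}$ of~\eqref{riPM-defn}; the asymptotics $r_{i-}\searrow 0$ and $r_{i+}\nearrow r_s$ as $k\uparrow\infty$ are read off the explicit formula there, and continuous perturbation in $\ep$ yields $r_{i-,\ep}\searrow r_{i-}$ and $r_{i+,\ep}\nearrow r_{i+}$ as $\ep\downarrow 0$.

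Combining the two regions, $\Lop\V\le-\ep$ holds on $\{r\in[r_{i-,\ep},r_{i+,\ep}]\}\cup\{r\ge r_{a,\ep}\}$, which is precisely $U_{k,\ep}^c$. The main technical obstacle is keeping the joint parameter window for $\ep$ and $k$ consistent: existence of the inner roots requires $k\ge\sqrt{5}/r_a$ per the discussion after~\eqref{riPM-defn}, and the displacement of the perturbed roots must be small enough to preserve the ordering $0<r_{i-,\ep}<r_{i+,\ep}<r_s<r_{a,\ep}$. Both are elementary quantitative bounds read off the closed-form square-root and quadratic expressions above, with no additional machinery needed.
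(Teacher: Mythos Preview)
Your approach --- replacing $\sin\theta$ by its extremal values and reducing to one-variable root problems --- is exactly the paper's strategy. Your derivations of $r_{a,\ep}$ from the outer inequality and of $r_{i\pm,\ep}$ from the inner quartic~\eqref{riPM_ep-defn} match the paper's proof step for step, and your monotonicity/continuity arguments for the limits in $\ep$ and $k$ are correct.

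There is one concrete gap, though: your identification of the complement is wrong. The set $U_{k,\ep}$ in~\eqref{U_ep-defn} carries $\theta$-restrictions, so $U_{k,\ep}^c$ is \emph{not} the purely radial set $\{r\in[r_{i-,\ep},r_{i+,\ep}]\}\cup\{r\ge r_{a,\ep}\}$ you claim. The complement also contains the slices $(0,r_{i-,\ep})\times[0,\pi]$ and $(r_{i+,\ep},r_{a,\ep})\times[\pi,2\pi)$, and on those slices the extremal value of $\sin\theta$ you substituted is not the one actually attained, so your scalar inequalities say nothing there. The paper covers these pieces with a separate preliminary observation: whenever $r>r_s$ with $\theta\in(\pi,2\pi)$, or $r<r_s$ with $\theta\in(0,\pi)$, the first term $\sin\theta\,\sgn(r-r_s)/r$ in~\eqref{LopV} is already non-positive, so all three terms have the correct sign and $\Lop\V<0$ without any worst-case reduction. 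You need to add this sign argument (or an equivalent) to cover the full complement; the uniform-in-$\theta$ worst-case bound alone handles only the radial part.
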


With the above, using \cite[Theorem 3.9]{Khasminskii_Stochastic_2011},
we can obtain the following corollary.

\begin{corollary}[Recurrence Time Bound] \label{cor:rec-time-bound}
 For $\ep$ sufficiently small and $k$ sufficiently large,
 the trajectory of the UAV derived from \eqref{me-sde}
 under control policy \eqref{u-defn} is recurrent to $U_{k, \ep}$
 as defined in \eqref{U_ep-defn}. Given an initial point
 $(r_0, \theta_0)$, the expected recurrence time $\tau_\ep$
 until the UAV reaches $U_{k, \ep}$ is bounded by
 \begin{align}
 \E^{(r_0, \theta_0)} \tau_\ep \le
\frac{\V(r_0,\theta_0)}{\ep}
=  \frac{k|r_0-r_s| +\theta_0 +2\pi}{V \ep}.
\end{align}
\end{corollary}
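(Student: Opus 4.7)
The plan is to apply a standard Dynkin/Khasminskii argument using the Lyapunov function $\V$ from \eqref{sgn-lya-fcn}. By Theorem~\ref{thm:r_ep-exist}, $\Lop\V \le -\ep$ on $U_{k,\ep}^c$, and $\V$ is smooth on this complement since its only non-differentiability, along $\{r = r_s\}$, lies inside $U_{k,\ep}$. Moreover $\V$ is nonnegative everywhere: the term $(k/V)|r-r_s|$ is clearly nonnegative, and since $\theta \in [0, 2\pi)$ one has $\theta\,\sgn(r-r_s)/V \in (-2\pi/V, 2\pi/V]$, which is dominated by the additive constant $2\pi/V$.

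First I would define $\tau_\ep = \inf\{t \ge 0 : (r(t),\theta(t)) \in U_{k, \ep}\}$ and form the localization $\tau_\ep^N = \tau_\ep \wedge N$. Applying Dynkin's formula to $\V$ at the stopping time $\tau_\ep^N$ yields
\[
\E^{(r_0,\theta_0)} \V(r(\tau_\ep^N), \theta(\tau_\ep^N)) = \V(r_0, \theta_0) + \E^{(r_0,\theta_0)} \int_0^{\tau_\ep^N} \Lop\V(r(s),\theta(s))\, ds.
\]
On $\{s < \tau_\ep^N\}$ the path lies in $U_{k, \ep}^c$, so the integrand is at most $-\ep$ by Theorem~\ref{thm:r_ep-exist}, and nonnegativity of $\V$ gives $\ep \cdot \E^{(r_0,\theta_0)} \tau_\ep^N \le \V(r_0, \theta_0)$. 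Passing $N \to \infty$ by monotone convergence yields $\E^{(r_0,\theta_0)} \tau_\ep \le \V(r_0, \theta_0)/\ep$, and the stated closed-form bound follows from \eqref{sgn-lya-fcn} together with the crude estimate $\theta_0\,\sgn(r_0 - r_s) \le \theta_0$.

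The main technical obstacle I foresee is verifying the hypotheses of Khasminskii's Theorem~3.9, equivalently justifying the Dynkin calculation, despite the degenerate diffusion (the Brownian driver enters only the $\theta$-component) and the drift singularity of $V\sin\theta/r$ at $r = 0$. The regularity of the SDE solution noted after \eqref{me-sde} rules out finite-time explosion, and the inclusion of the inner region $(0, r_{i-,\ep}) \times (\pi, 2\pi)$ in the recurrent set provides the needed buffer around the origin in the angular sector where $\dot r$ can be positive. With these observations in hand, the calculation above is essentially verbatim the proof of the negative-drift Lyapunov criterion in \cite[Theorem~3.9]{Khasminskii_Stochastic_2011}, and the explicit constant in the bound is immediate from the definition of $\V$.
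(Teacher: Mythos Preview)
Your proposal is correct and matches the paper's approach exactly: the paper does not give a self-contained argument but simply invokes \cite[Theorem~3.9]{Khasminskii_Stochastic_2011} together with Theorem~\ref{thm:r_ep-exist}, and your Dynkin/localization computation is precisely the standard proof of that cited result. Your observation that the displayed equality in the corollary is really the upper bound $\theta_0\,\sgn(r_0-r_s)\le\theta_0$ is a fair reading, and your flagging of the degenerate diffusion and the $r\to 0$ singularity is appropriate caution that the paper handles only by the regularity remark after \eqref{me-sde}.
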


\begin{proof}[Proof of Theorem~\ref{thm:r_ep-exist}]
We see that the second and third terms of \eqref{LopV} are always
non-positive. If $r>r_s$ and $\theta \in (\pi, 2\pi)$ then $\Lop \V < 0$.
Similarly if $r<r_s$ and $\theta \in (0, \pi)$, then $\Lop\V <0$.

We note that $\Lop\V \le 0$ for $r \ge r_a$, regardless of $\theta$.
In particular, considering the worst case scenario $\sin\theta =1$ we can
solve for $r>r_s$
  such that
  \begin{align*}
    \begin{aligned}
      \Lop \V (r) &= \frac{1}{r} \left[ 1 - k\sqrt{r^2-r_s^2} \right]
      \le -\ep.
    \end{aligned}
  \end{align*}
  This has a solution if $\ep \le k$ (where $k$ can be taken in
  $[\frac{1}{r_a}, \infty)$) and leads us to define
    \begin{align}
      \begin{aligned}
	r_{a,\ep} & \overset{\Delta}{=}
	\frac{\ep + \sqrt{ k^2 r_a^2[k^2-\ep^2] + \ep^2} }
	{k^2 -\ep^2} .
      \end{aligned}
    \end{align}
Then $\Lop \V (r,\theta) \le -\ep$ for $r \ge r_{a,\ep}$ regardless of $\theta$.
As $\ep \downarrow 0$ or as $k \uparrow \infty$, we have $r_{a,\ep}
\downarrow r_a$. Thus we can force $r_{a,\ep}$ arbitrarily close to $r_a$.

If $r<r_s$, then
\begin{align*}
	\Lop \V = \frac{-\sin\theta}{r} - \frac{k}{r_s}\sqrt{r_s^2-r^2}.
\end{align*}
Again considering the worst-case scenario $\sin\theta=-1$,
we inspect the function
\begin{align}
	g(r) = \frac{1}{r} - \frac{k}{r_s}\sqrt{r_s^2-r^2}
\end{align}
and solve for $r_i$ such that $g(r_i)=0$.
This reduces to \eqref{riPM-defn},
which has no solutions in $(0, r_s)$ for
$k \in (\frac{1}{r_a}, \frac{\sqrt{5}}{r_a})$,
but otherwise has two solutions $r_{i-} \to 0$
and $r_{i+} \to r_a$ as $k \to \infty$.
If $r_{i-} \le r \le r_{i+}$, then $\Lop \V \le 0$.
If $k<\frac{\sqrt{5}}{r_a}$,
then $\Lop\V$ is always positive in a neighborhood of
$\theta=3\pi/2$ for all $0< r \le r_s$.

Repeating the process to solve where $g(r) =-\ep$,
we obtain the quartic equation
\begin{align}\label{riPM_ep-defn}
	r^4 + \frac{r_s^2}{k^2} \left( \ep^2 - k^2 \right) r^2
	- 2\ep \frac{r_s^2}{k^2} r + \frac{r_s^2}{k^2}=0
\end{align}
which has two solutions  $r_{i-,\ep} $ and $r_{i+,\ep}$ in
$(r_{i-}, r_{i+})$ for sufficiently small $\ep$.
Between $r_{i-,\ep}$ and $r_{i+, \ep}$ we have that $g(r) \le -\ep$ ,
with $r_{i-,\ep} \downarrow r_{i-}$ and $r_{i+,\ep} \uparrow r_{i+}$
as $\ep \downarrow 0$. Then using $r_{i-} \searrow 0$,
$r_{i+} \nearrow r_s$, and $r_s \nearrow r_a$ as $k \uparrow$,
the corollary stands.
\end{proof}

\begin{remark}[$\ep$ Upper Bound]
We note that the upper bound on the recurrence time $\tau_\ep$ given 
in Corollary~\ref{cor:rec-time-bound} is inversely proportional to $\ep$ 
(corresponding to the size of the recurrent set $U_{k,\ep}$). 
Thus allowing for a larger neighborhood of our desired orbit will decrease the bound for the time $\tau_\ep$ it takes to reach said neighborhood. One may wonder how large we may take $\ep$ to be while 
still being able to solve for a recurrent set $U_{k,\ep}$, off of which 
$\Lop \V \le -\ep$.
To find the maximum value of $\ep$ which allows for the result,
one may analyze the function
$g(r)=\frac{1}{r} -\frac{k}{s} \sqrt{s^2-r^2}$, where
$s=\sqrt{r_a^2-k^{-2}}$ varies with $k$ but is bounded between
$0$ and $r_a$.
Heuristically, one sees that the minimum value of $g(r)$ is $-O(k)$,
and thus the maximum possible value of $\ep$ is $O(k)$.
To find the explicit bound, one finds
\begin{align*}
  g'(r) &= \frac{kr}{s \sqrt{s^2-r^2}} -\frac{1}{r^2}=0
  \implies r_*^6 +\frac{s^2}{k^2}r_*2 -\frac{s^4}{k^2} = 0
\end{align*}
which has a unique real solution $r_*$ in $(r_{i-}, r_{i+})$ given by
\begin{align}
	\begin{aligned}
  r_*^2 &= \sqrt[3]{ \frac{9(sk)^4 + \sqrt{81(sk)^8+12(sk)^6}}{18k^6} }\\
  & \quad - \sqrt[3]{ \frac{\frac{2}{3}s^6}{9(sk)^4 + \sqrt{81(sk)^8+12(sk)^6}} }
  	\end{aligned}
\end{align}
whose evaluation in $g(r_*)$ gives the lower bound needed for the
analysis inside $r<r_s$.
Thus taking $\ep < \min\{g(r_*), r_a^{-1} \}$
will yield a valid result.
\end{remark}

\begin{remark}[$k$ `Practical' Upper Bound]
For a fixed value of $k$, one may let $\ep \searrow 0$ and obtain
a `minimal' recurrent set
\[U_k = \{(0,r_{i-}) \times (\pi, 3\pi/2) \}
\cup \{ (r_{i+}, r_a) \times (0,\pi) \}. \]
While analytically one may take $k$ arbitrarily large to force
$r_{i-} \searrow 0$ and $r_{i+} \nearrow r_s \nearrow r_a$ and tighten
the minimal recurrent set, practically one encounters problems if the gain
is too large. If the maximum control effort $2kV$ is larger than $\pi$, then
(in addition to clearly violating practical turning constraints)
it is possible for the UAV to spin out, resulting in significant deviations
from the desired orbit. We shall observe this in the simulation study, e.g.,
Figure~\ref{fig:noisy-r_erravg2}.
\end{remark}

\section{Simulation Study}\label{sec:simulation}

\subsection{Measurement Error, Windless}
Here we simulate the performance of the control algorithm \eqref{u-defn}
with additive measurement errors in the absence of wind, as in \eqref{me-sde}.
The desired orbit is of radius $r_a=10$. We take the velocity of the UAV
$V=1$ and the standard deviation of the
measurement error $\sigma = 0.5$.
We run the simulation for 350 seconds, updating the control every 0.5
seconds.
\begin{figure}[ht]
\begin{minipage}{0.49\linewidth}
\centering
\includegraphics[width=\linewidth]{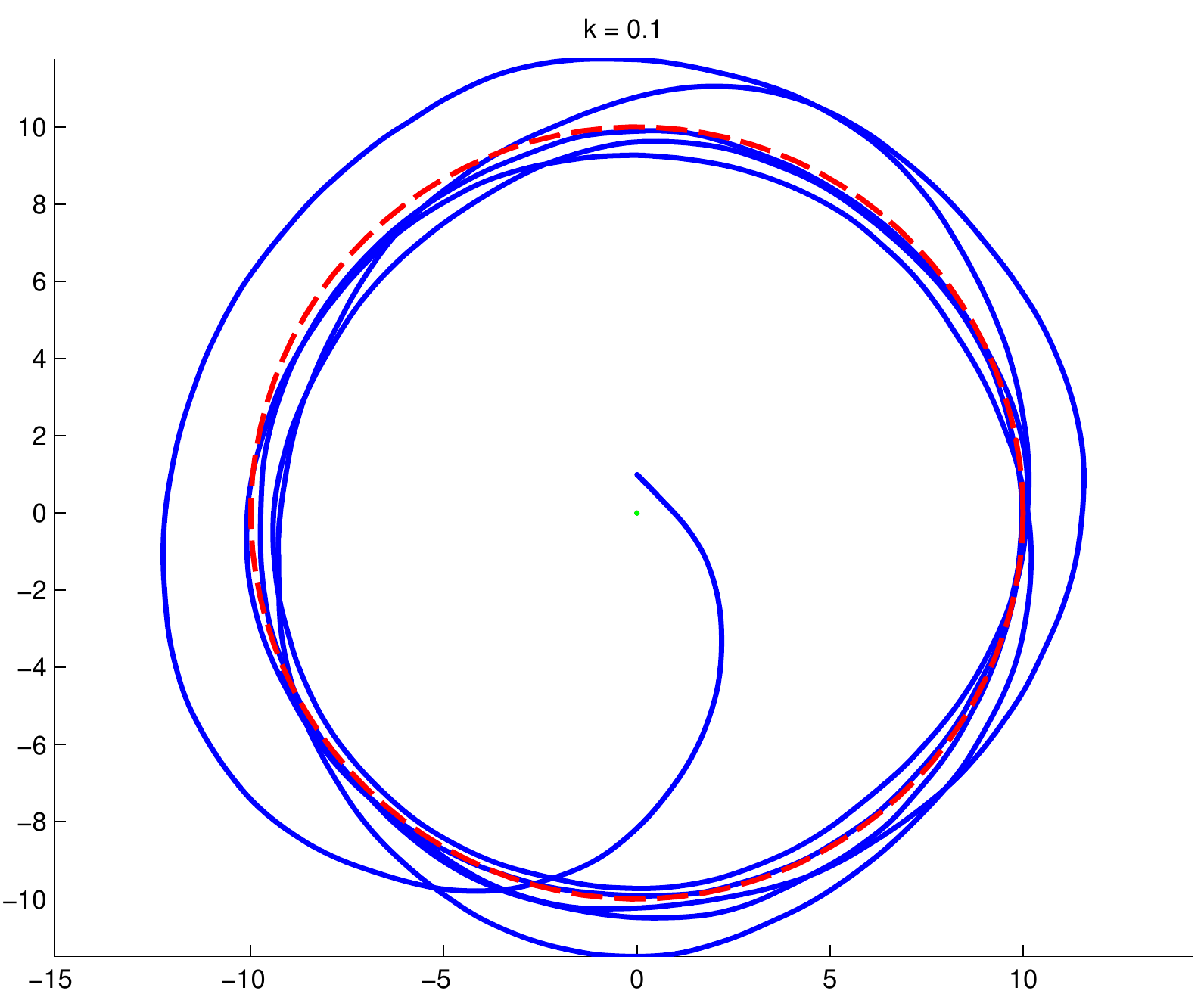}
\caption{Trajectory with measurement error, $k=0.1$}
\label{fig:noisy-k_0_1}
\end{minipage}
\begin{minipage}{0.49\linewidth}
\centering
\includegraphics[width=\linewidth]{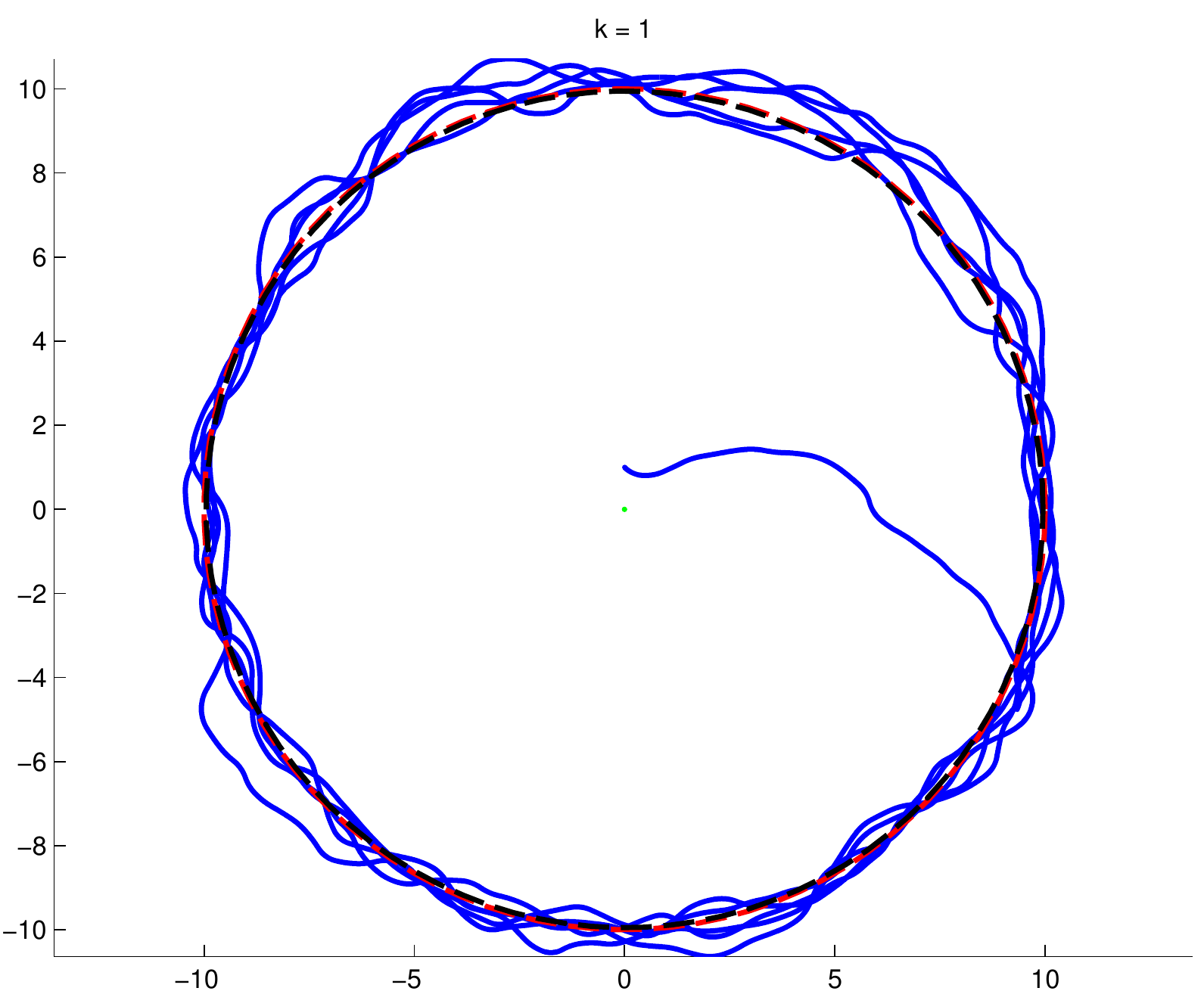}
\caption{Trajectory with measurement error, $k=1$}
\label{fig:noisy-k_1_0}
\end{minipage}
\end{figure}
Figure~\ref{fig:noisy-k_0_1} shows
the trajectory of the of UAV with gain size $k=0.1 = r_a^{-1}$
corresponding to $r_s=0$, while Figure~\ref{fig:noisy-k_1_0} shows
the trajectory with gain size $k=1.0$ corresponding to $r_s=9.95$.
We observe that the smaller gain size gives a smoother trajectory
but larger deviations from the desired radius. The larger gain size
adheres to the desired orbit more closely, but at the expense of a
larger control effort.

We then run the simulation 20 times, increasing the gain $k$ on each
iteration from the minimum value $k=0.1$ by increments of $0.15$, and
collect statistics its performance. Figures~\ref{fig:noisy-r_erravg2} and
\ref{fig:noisy-rdot_erravg2} show the
average of $(r-r_a)^2$ and $\dot{r}^2$ respectively as the gain $k$
increases. This supports the observation from the trajectories that higher
gain choices correspond to less radial error at the expense of
smoothness and large control effort; though only to a point. If the maximum
control adjustment $2kV$ is larger than $\pi$ (here corresponding when
$k=\pi/2$), then the UAV may turn directly around instantaneously.
Besides being quite impractical, this causes the UAV to over-correct and
spin out of control.

\begin{figure}[h]
\centering
\includegraphics[width=0.75\linewidth]{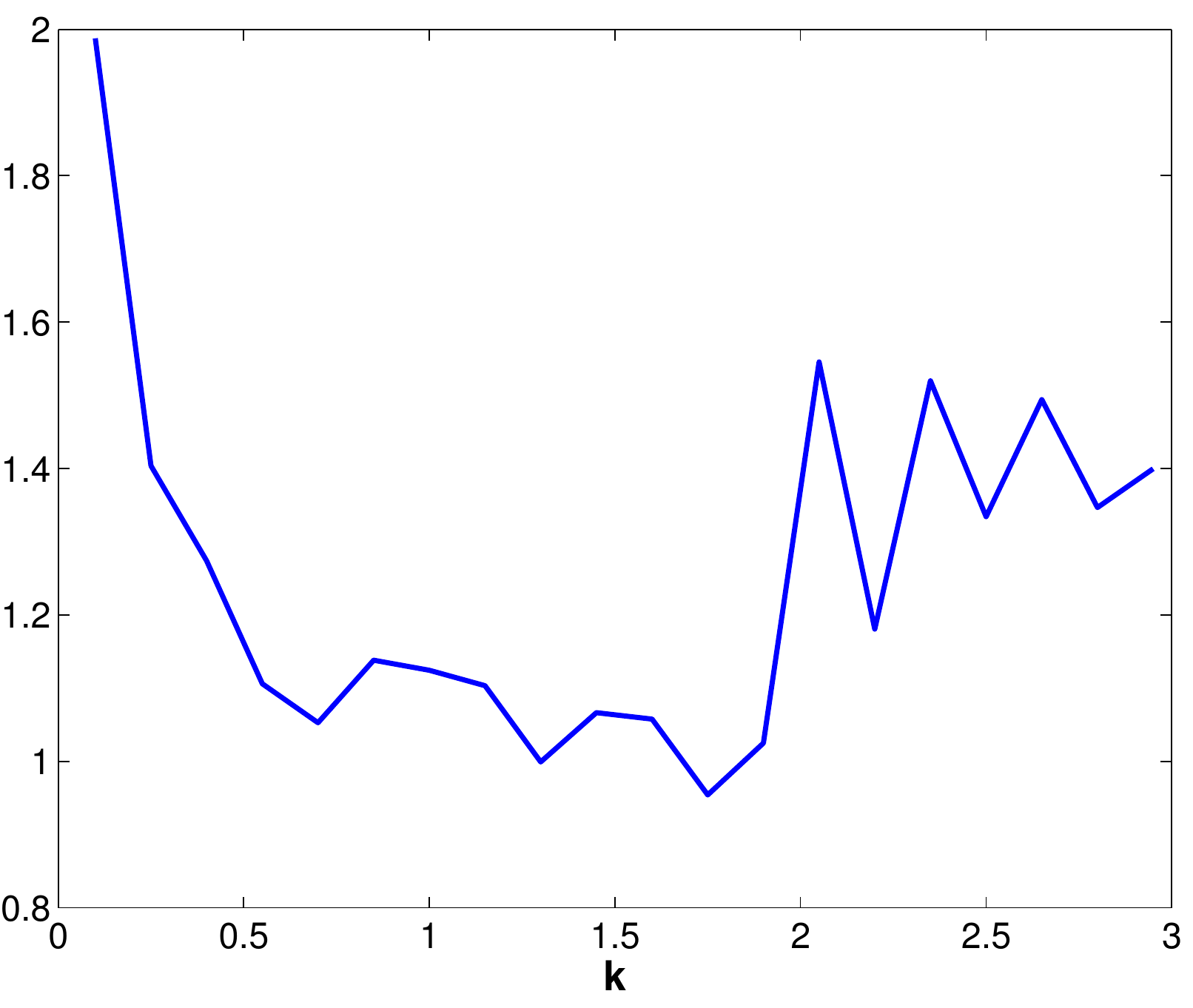}
\caption{Average mean-square error of $(r-r_a)$}
\label{fig:noisy-r_erravg2}
\end{figure}
\begin{figure}[h]
\centering
\includegraphics[width=0.75\linewidth]{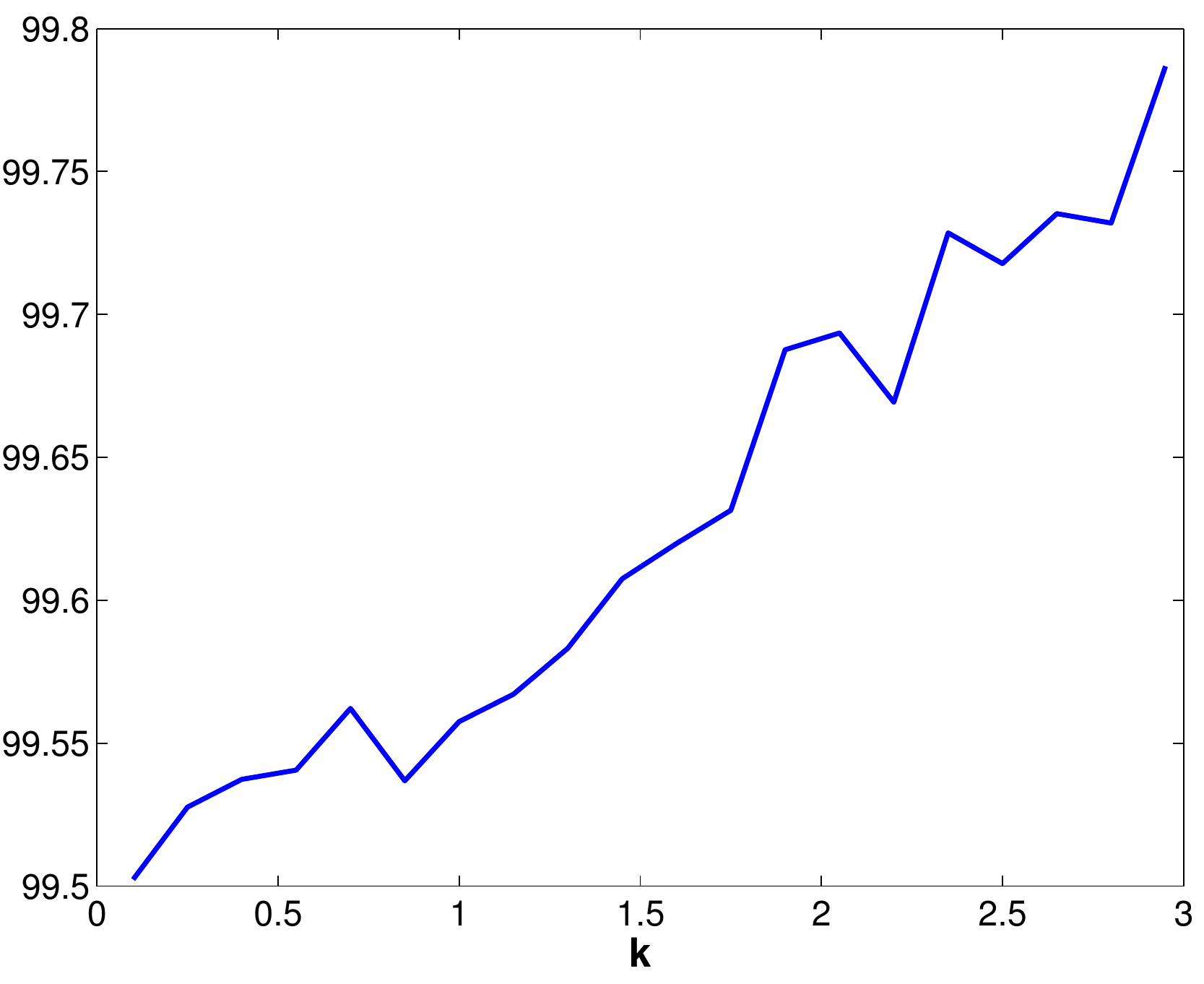}
\caption{Average mean-square error of $\dot{r}$}
\label{fig:noisy-rdot_erravg2}
\end{figure}

\subsection{Measurement Error with Constant Wind}
Here we examine the performance of the algorithm under the influence
of measurement errors (as above) and constant wind.
One may formulate the `windy' system with
constant wind bias of speed $W_s$ and direction $w_d$ as
\begin{align}
  \begin{aligned} \label{wind-form}
    d \left[
    \begin{array}{c}
      x \\ y \\ \psi
    \end{array}
  \right]
  = \left[
  \begin{array}{c}
    V\cos\psi +W_s \cos w_d \\
    V\sin\psi +W_s \sin w_d \\
    u
  \end{array}
\right] dt 
  +\left[ 
  \begin{array}{c}
  0 \\ 0 \\ -k\sigma
  \end{array}
  \right] d \xi .
  \end{aligned}
\end{align}
We simulate trajectories under such a wind model, using the same control policy $u$ as in \eqref{u-defn}. We take the windspeed $W_s=V/4=0.25$
and the wind direction $w_d=\pi/4$.
\begin{figure}[ht]
\begin{minipage}{0.45\linewidth}
\centering
\includegraphics[width=\textwidth]{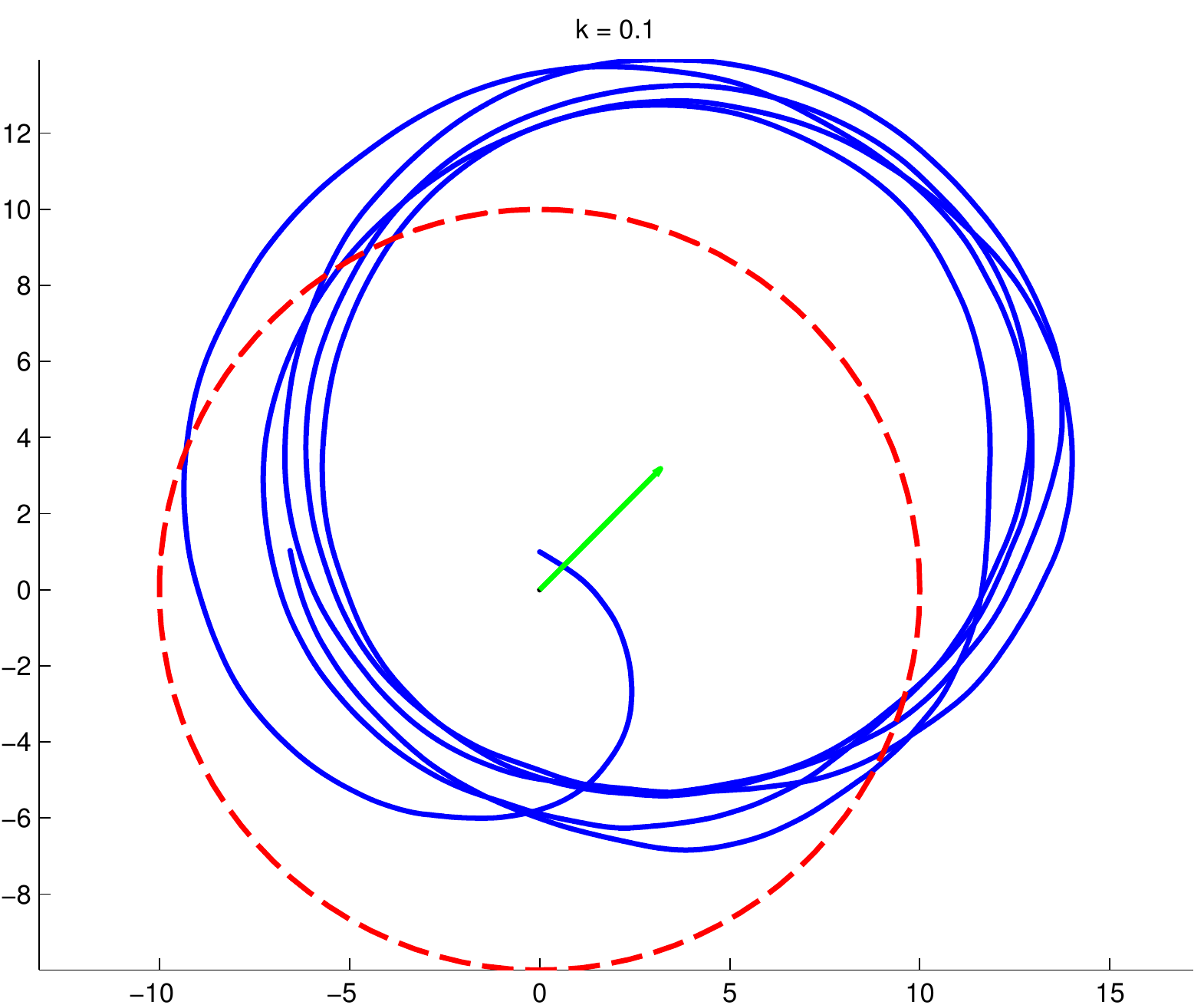}
\caption{Trajectory with measurement error and wind, $k=0.1$}
\label{fig:windy-k_0_1}
\end{minipage}
\begin{minipage}{0.45\linewidth}
\centering
\includegraphics[width=\textwidth]{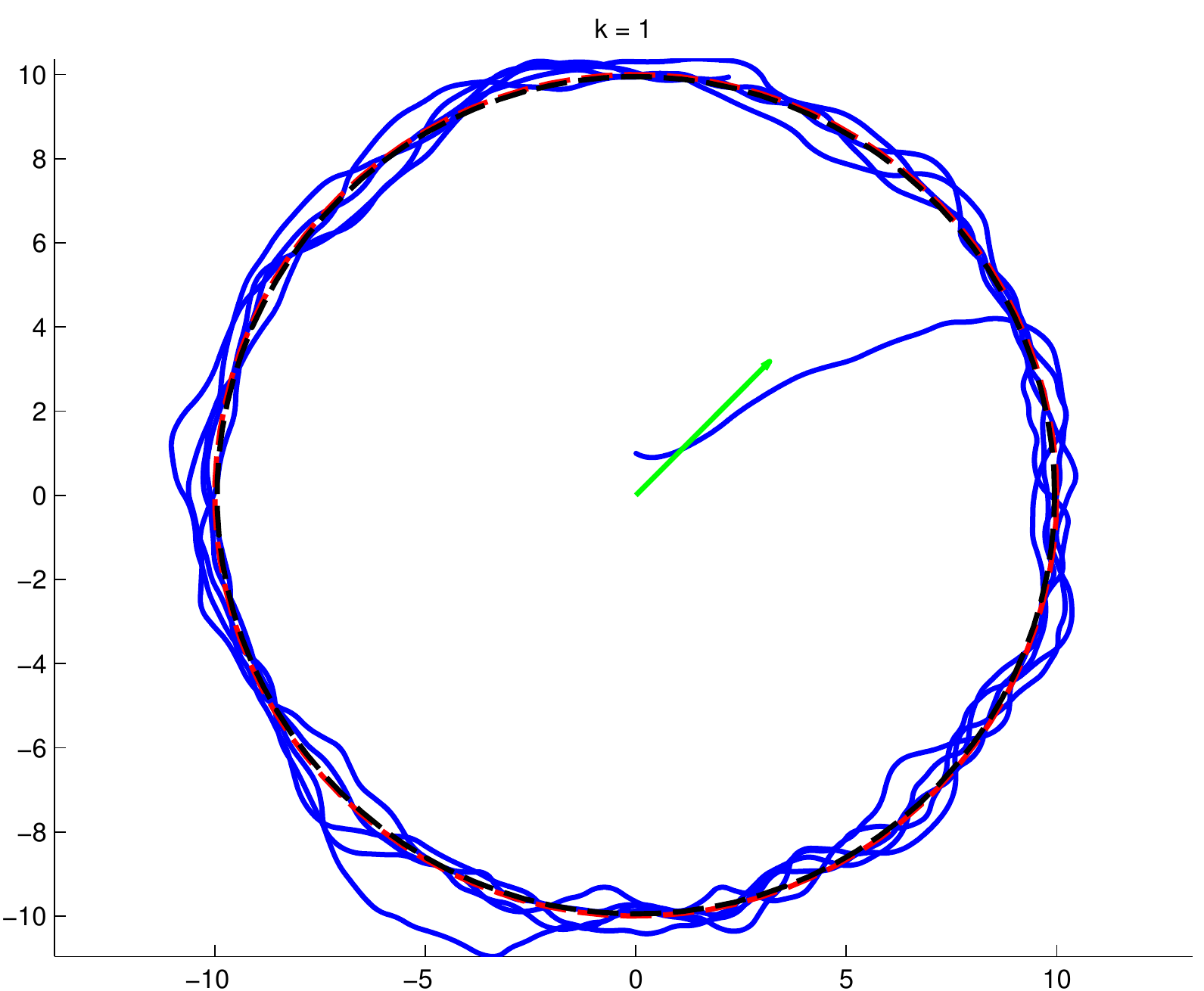}
\caption{Trajectory with measurement error and wind, $k=1$}
\label{fig:windy-k_1_0}
\end{minipage}
\end{figure}
Figures~\ref{fig:windy-k_0_1} and \ref{fig:windy-k_1_0} depict the
windy trajectories analogous to the windy case. We note that with
the minimal gain size the trajectory forms a circular orbit, but is shifted
off-target in the direction of the wind. When the gain is turned up the UAV
adjusts more dynamically and is able to adhere to the desired radius much better.
\begin{figure}[h]
\centering
\includegraphics[width=0.75\linewidth]{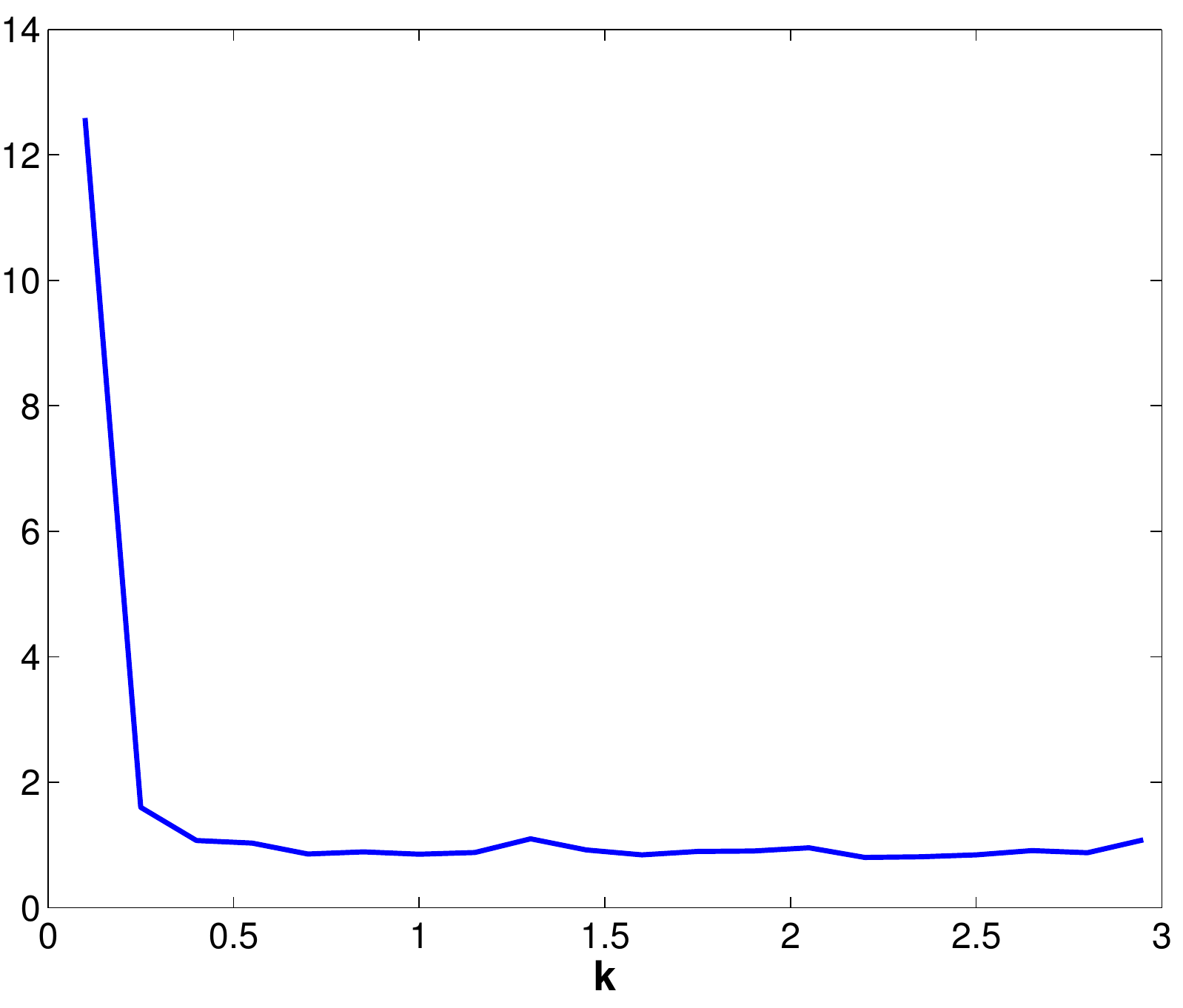}
\caption{Average mean-square error of $(r-r_a)$ with wind}
\label{fig:windy-r_erravg2}
\end{figure}
\begin{figure}[h]
\centering
\includegraphics[width=0.75\linewidth]{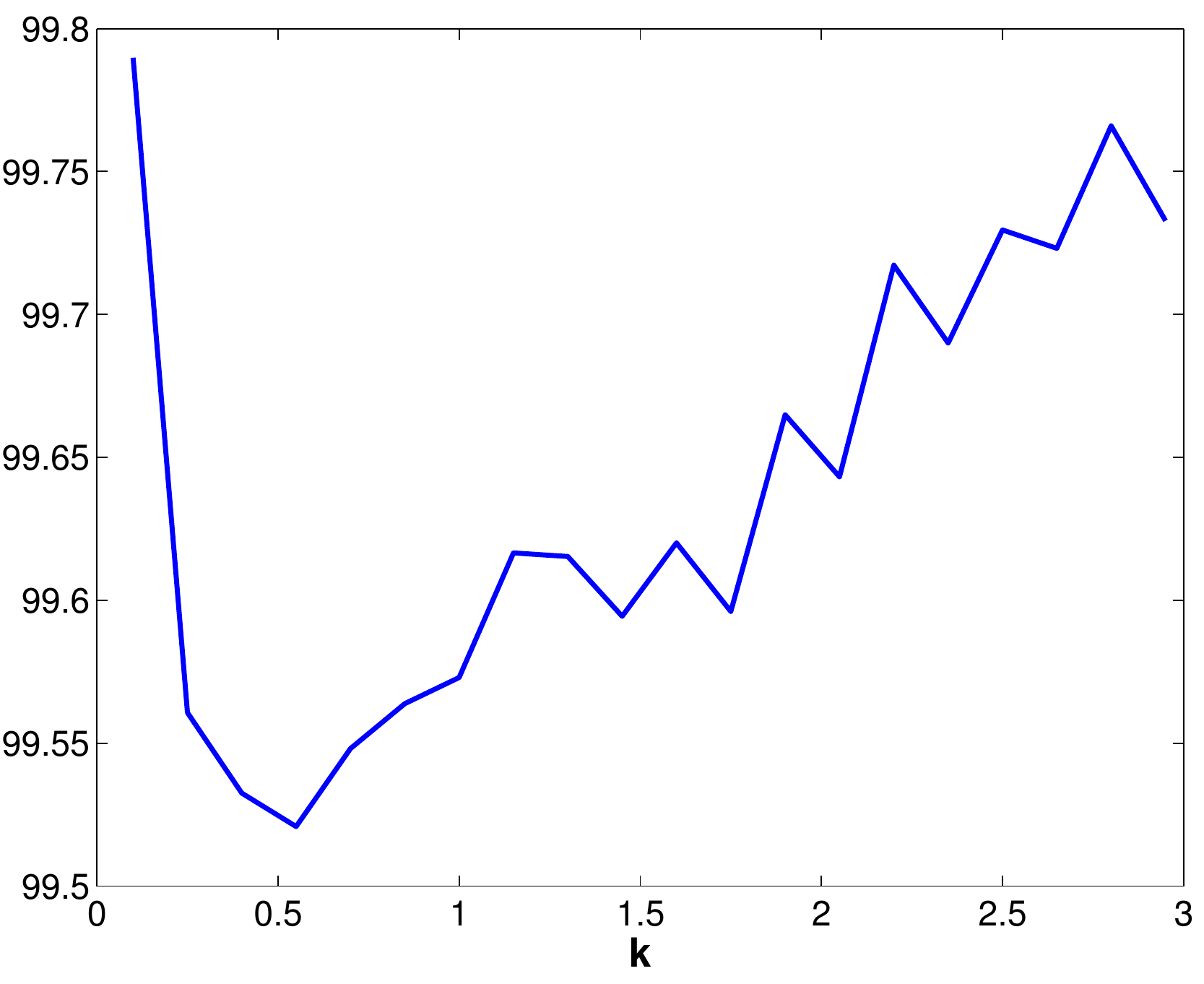}
\caption{Average mean-square error of $\dot{r}$ with wind}
\label{fig:windy-rdot_erravg2}
\end{figure}
Figures~\ref{fig:windy-r_erravg2} and \ref{fig:windy-rdot_erravg2} show the
mean-square error of $(r-r_a)$ and $\dot{r}$ under the influence of wind
and measurement errors.

\section{Conclusion and Future Work} \label{sec:conclusion}
This paper has established a robust control policy for a UAV to
circumnavigate a stationary target using noise-corrupted range and range rate
measurements,
without any use or assumption of location information for the UAV nor
the target. Assuming additive measurement errors we established a
 recurrence result, bounding the time until the UAV reaches
a neighborhood of the desired orbit, via a Lyapunov
function approach. A simulation study was then used to collect statistics
of the performance of the control policy with measurement errors, as well
as with drifting bias due to the influence of wind.

Future work may attempt to establish that the trajectory is set-wise
stable to the recurrent set, as simulations seem to suggest. Traditional
stochastic stability results as in \cite{Khasminskii_Stochastic_2011} are
not applicable due to the persistence of noise (non-zero diffusion
coefficient) at the `stability' point $(r_d,\pi/2)$. However, $p${th}-moment
set-wise stability in the sense of \cite{Nunez_Stability_2013} may be
possible.


Other research directions include formal analysis of the system with
constant wind bias as in \eqref{wind-form}.
The addition of wind terms in $\dot{x}, \dot{y}$ prevent the reduction of
the system to $(r,\theta)$. However, assuming one can additionally
measure the heading angle $\psi$ (by addition of a magnometer), it is
possible to formulate the current control and windy system dynamics in
terms of $(r,\theta,\psi)$. Such conversion assumes $W_s$ and $w_d$
are known, but it may be possible to statistically estimate these
quantities from a few revolutions of the target under the current control.
For example, one sees in Figure~\ref{fig:windy-k_0_1} that
with small gain there is significant bias of the orbit in direction of the
wind. One may attempt to first estimate the wind direction $w_d$ as a
statistical change-point problem from when the radius is under-biased to
when it is over-biased. One may then try to estimate wind speed $W_s$
by the magnitude of such a change.

Finally, the addition of heading angle $\psi$ measurements may allow for other control
schemes to be developed,  perhaps resulting smoother trajectories and
less control effort.

\end{document}